\newtheorem*{theorem*}{Theorem}
\newtheorem{theorem}{Theorem}
\newtheorem{lemma}{Lemma}
\newtheorem{proposition}{Proposition}
\newtheorem*{Conjecture}{Conjecture}
\theoremstyle{remark}
\numberwithin{equation}{section}
\numberwithin{corollary}{theorem}
\newcommand{\Z}{\mathbb{Z}}
\newcommand{\Q}{\mathbb{Q}}
\begin{document}
\title[On some symmetries of the base $ n $ expansion of $ 1/m $ - 1]{On some symmetries of the base $ n $ expansion of $ 1/m $ : Comments on Artin's Primitive root conjecture}

\author{Kalyan Chakraborty}
\email[Kalyan Chakraborty]{kalychak@ksom.res.in}
\address{ Kerala School of Mathematics, KCSTE,
	Kunnamangalam, Kozhikode, Kerala, 673571, India.}

\author{Krishnarjun Krishnamoorthy}
\email[Krishnarjun K]{krishnarjunk@hri.res.in, krishnarjunmaths@gmail.com}
\address{Harish-Chandra Research 
	Institute, HBNI,
	Chhatnag Road, Jhunsi, Prayagraj 211019, India.}

\keywords{Representation of numbers, Primitive roots, Artin Primitive Root Conjecture}
\subjclass[2020] {Primary: 11A07.}
\maketitle

\begin{abstract}
	Suppose $ m,n\geq 2 $ are co prime integers. We prove certain new symmetries of the base $ n $ representation of $ 1/m $, and in particular characterize the subgroup generated by $ n $ inside $ (\Z/m\Z)^\times $. As an application we give a sufficient condition for a prime $ p $ such that a non square number $ n $ is a primitive root modulo $ p $.
\end{abstract}

\section{Introduction}\label{Section "Introduction"}

For a given integer $ m $, the group $ \Z/m\Z $ is well understood. The structure of $ (\Z/m\Z)^\times $ is a bit more mysterious. For example if $ m $ were a prime, it is rather elementary to show that $ (\Z/m\Z)^\times $ is cyclic but it is still unknown, except for a few trivialities, whether a given integer $ n $ co prime to $ m $ will generate $ (\Z/m\Z)^\times $ or not. Such an integer $ n $ is called a primitive root modulo $ m $. One conjecture along these lines is due to Artin \cite{Pieter Moore}.

\begin{Conjecture}[Artin, Qualitative Version]
	If $ n $ is a natural number which is not equal to $ \{0,1\} $ and not a perfect square, then there are infinitely many primes $ m $ for which $ n $ is a primitive root modulo $ m $.
\end{Conjecture}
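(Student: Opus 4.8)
The plan is to prove the conjecture by establishing that, for a fixed $ n\geq 2 $ which is not a perfect square, the set of primes $ p $ admitting $ n $ as a primitive root has positive lower density, which in particular forces it to be infinite. The starting point is the reformulation that fits the viewpoint of this paper: since $ n $ is a primitive root modulo a prime $ p\nmid n $ exactly when the subgroup $ \langle n\rangle $ fills all of $ (\Z/p\Z)^\times $, and since the multiplicative order of $ n $ modulo $ p $ equals the period length of the base-$ n $ expansion of $ 1/p $, the conjecture is equivalent to asserting that infinitely many primes $ p $ have a \emph{full-period} expansion, of length $ p-1 $. I would record the elementary index criterion: $ n $ fails to be a primitive root modulo $ p $ if and only if there is a prime $ q\mid p-1 $ for which $ n $ is a $ q $-th power residue modulo $ p $. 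Thus the primitive-root primes are precisely those avoiding every such ``$ q $-th power obstruction.''

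The second step converts each obstruction into a splitting condition. For a prime $ q $, the statement ``$ q\mid p-1 $ and $ n $ is a $ q $-th power modulo $ p $'' is, away from the finitely many ramified primes, equivalent to $ p $ splitting completely in the Kummer field $ K_q=\Q(\zeta_q,n^{1/q}) $; more generally, for squarefree $ k $ the simultaneous obstruction at all $ q\mid k $ corresponds to complete splitting in $ K_k=\Q(\zeta_k,n^{1/k}) $. By inclusion--exclusion, the number of primitive-root primes up to $ x $ is
\[
\sum_{k\geq 1}\mu(k)\,\pi_{K_k}(x),
\]
where $ \pi_{K_k}(x) $ counts primes $ \leq x $ that split completely in $ K_k $. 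The Chebotarev density theorem gives $ \pi_{K_k}(x)\sim \tfrac{1}{[K_k:\Q]}\,\pi(x) $, and since $ n $ is not a perfect square the degrees satisfy $ [K_k:\Q]=k\varphi(k) $ for odd $ k $ (with a controlled factor of $ 2 $ for even $ k $), so the formal density
\[
\sum_{k\geq 1}\frac{\mu(k)}{[K_k:\Q]}
\]
converges to a \emph{positive} constant, a rational multiple of Artin's constant $ \prod_q\bigl(1-\tfrac{1}{q(q-1)}\bigr) $. Positivity of this constant is what must be transferred from the formal sum to an honest asymptotic lower bound.

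The hard part --- and the reason the statement is a conjecture rather than a theorem --- is making the inclusion--exclusion rigorous, which demands an estimate for $ \pi_{K_k}(x) $ that is effective \emph{uniformly} as $ k $ grows with $ x $. One must truncate at some level $ K=K(x) $, control the head $ \sum_{k\leq K}\mu(k)\,\pi_{K_k}(x) $ via effective Chebotarev, and bound the tail $ \sum_{k>K} $, where the relevant primes satisfy $ q\mid p-1 $ for some large $ q $, by a Brun--Titchmarsh sieve. The tail is genuinely manageable; the obstruction is the head. Unconditional effective Chebotarev carries an error term sensitive to a possible Siegel zero of the Dedekind zeta function of $ K_k $, which restricts the usable range of $ k $ to a tiny power of $ \log x $ --- far too small to recover the convergent main term. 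Square-root savings for the $ \zeta_{K_k} $ would permit $ K $ as large as roughly $ \sqrt{x}/\log^2 x $ and close the gap (this is Hooley's route), so the entire difficulty is concentrated in upgrading the error control to hold unconditionally and uniformly across the full range of auxiliary moduli. I would therefore direct the decisive effort at exactly this uniform counting estimate, since every other step of the scheme --- the reformulation through the period, the Kummer translation, the convergence and positivity of the main term, and the sieve control of the tail --- is already unconditional.
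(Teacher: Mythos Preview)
The statement you are attempting is not a theorem of the paper; it is Artin's \emph{conjecture}, stated as such and left unproved. The paper explicitly notes that it is known only conditionally on GRH via Hooley, and the paper's own contribution is the much weaker Theorem~\ref{Theorem 1}, which gives a sufficient condition on a single prime $p$ for $n$ to be a primitive root modulo $p$. There is therefore no ``paper's own proof'' to compare your proposal against.

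As for the proposal itself: what you have written is a faithful sketch of Hooley's conditional argument --- the Kummer-field reformulation, the inclusion--exclusion over squarefree $k$, the Chebotarev main term, and the Brun--Titchmarsh tail --- and you correctly isolate the uniform effective Chebotarev estimate for $\zeta_{K_k}$ as the missing ingredient. But you do not supply that ingredient; you say only that ``the decisive effort'' should be directed there. That is not a proof, it is a restatement of why the problem is open. Absent a new idea for controlling the head sum unconditionally (or some genuinely different mechanism), the proposal does not advance beyond the known conditional result, and the conjecture remains exactly as open after your outline as before it.
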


Artin's conjecture is known conditionally under the generalized Riemann hypthoesis for certain Dedekind zeta functions due to the work of Hooley \cite{Hooley}. More recent works on Artin's conjecture include that of Heath-Brown \cite{Heath-Brown} who showed that there are at most two primes for which the conjecture is false. For a quick survey of this topic, the reader is pointed to \cite{Pieter Moore}. It is rather classical that the behavior of $ n $ modulo $ m $ is intimately tied to the digits appearing in the base $ n $ expansion of $ 1/m $. This provides us with a concrete way to address the problem of primitivity. Nevertheless these digits turn out to be as mysterious as Artin's conjecture itself. These digits have been studied by many authors and recently a connection has been established between these digits and class numbers of certain quadratic fields \cite{Ram-Tha}. The authors hope to study this connection in a future work.

The aim of this article is twofold. Given $ n,m $ co prime, checking the primitivity of $ n $ modulo $ m $ is quite well understood from an algorithmic stand point. The first aim of this article is to describe a ``reverse process" of this method. The authors hope that this will help in resolving Artin's conjecture as this method is a first step in guessing a suitable prime $ m $ for which we can expect $ n $ to be a primitive root.

The second aim of this paper is to prove the following theorem. See \S \ref{Section "Basic Results"} for definitions. 

\begin{theorem}\label{Theorem 1}
	Suppose $ a $ is a non periodic string modulo $ n $ such that the following hold.
	\begin{enumerate}
		\item	There exists a prime $ p $ such that $ (p-1) | \mathcal{O}_a(n) $ and $ p | m $,
		\item	For every other prime $ q | m$, we have $ \mathcal{O}_q(n) | \mathcal{O}_p(n) $,
	\end{enumerate}
	where we have set 
	\[
	n^{\mathcal{O}_a(m)}-1 = a\cdot m.
	\]
	Then $ n $ is a primitive root modulo $ p $.
\end{theorem}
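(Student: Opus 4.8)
The plan is to translate the hypotheses back into the language of multiplicative orders via the dictionary of \S\ref{Section "Basic Results"}, and then to carry out a short bookkeeping argument with the Chinese Remainder Theorem and Fermat's little theorem. Write $\operatorname{ord}_x(n)$ for the order of $n$ modulo $x$ whenever $\gcd(x,n)=1$. The first step is to unwind the definitions: because $a$ is a \emph{non-periodic} string, the relation $n^{\mathcal{O}_a(m)}-1=a\cdot m$ forces $\mathcal{O}_a(m)$ to be the \emph{exact} (minimal) period of the base-$n$ expansion of $1/m$, so $\mathcal{O}_a(m)=\operatorname{ord}_m(n)$, and the quantity $\mathcal{O}_a(n)$ of hypothesis~(1) is this same integer. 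Thus (1) reads ``$(p-1)\mid\operatorname{ord}_m(n)$ and $p\mid m$'', while (2) says that for each prime $q\mid m$ with $q\ne p$ the order-datum $\mathcal{O}_q(n)$ of $q$ divides that of $p$. The goal is reduced to proving $\operatorname{ord}_p(n)=p-1$.

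The engine of the argument is the Chinese Remainder Theorem. Factoring $m=\prod_{r\mid m}r^{v_r(m)}$ gives $\operatorname{ord}_m(n)=\operatorname{lcm}_{r\mid m}\operatorname{ord}_{r^{v_r(m)}}(n)$, and for each prime $r\mid m$ one has $\operatorname{ord}_{r^{v_r(m)}}(n)=r^{e_r}\operatorname{ord}_r(n)$ for some $0\le e_r\le v_r(m)-1$. Hypothesis~(2) is precisely the statement that, inside this least common multiple, every contribution coming from a prime $q\ne p$ is dominated by the contribution of the $p$-primary part; hence the $\operatorname{lcm}$ collapses and $\operatorname{ord}_m(n)=\operatorname{ord}_{p^{v_p(m)}}(n)=p^{e_p}\operatorname{ord}_p(n)$.

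Now combine this with hypothesis~(1): $(p-1)\mid p^{e_p}\operatorname{ord}_p(n)$, and since $\gcd(p-1,p)=1$ we conclude $(p-1)\mid\operatorname{ord}_p(n)$. On the other hand Fermat's little theorem gives $\operatorname{ord}_p(n)\mid p-1$. The two divisibilities together force $\operatorname{ord}_p(n)=p-1$, i.e. $n$ generates $(\Z/p\Z)^\times$; this is exactly the assertion that $n$ is a primitive root modulo $p$.

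I expect the delicate point to be the middle step: extracting from hypothesis~(2) that no ``foreign'' prime $q\mid m$ can contribute to $\operatorname{ord}_m(n)$ a prime power dividing $p-1$ that is absent from $\operatorname{ord}_p(n)$. For squarefree $m$ this is immediate, since then $\operatorname{ord}_m(n)=\operatorname{lcm}_{r\mid m}\operatorname{ord}_r(n)=\operatorname{ord}_p(n)$ outright; in the general case one must also control the ramified factors $r^{e_r}$ (in particular the possibility $\lambda=q$ for a prime $\lambda$ with $\lambda\parallel p-1$), and it is here that the precise form of $\mathcal{O}_q(n)$ from \S\ref{Section "Basic Results"} together with the non-periodicity of $a$ must be used in full. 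The remaining steps are entirely elementary.
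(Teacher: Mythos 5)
Your argument follows essentially the same route as the paper's own proof: identify $\mathcal{O}_m(n)$ with $\mathcal{O}_a(n)$ using the non-periodicity of $a$, decompose $(\Z/m\Z)^\times$ by the Chinese Remainder Theorem so that $\mathcal{O}_m(n)$ becomes the least common multiple of the orders at the prime-power factors, use hypothesis (2) to collapse that lcm onto the $p$-part, and conclude from $(p-1)\mid p^{e_p}\mathcal{O}_p(n)$, $\gcd(p-1,p)=1$ and Fermat that $\mathcal{O}_p(n)=p-1$. Your final step is in fact written out more carefully than in the paper, which passes from ``$(p-1)$ divides $\mathcal{O}_{p^e}(n)$'' to ``$\mathcal{O}_p(n)=p-1$'' without recording the coprimality argument.

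The ``delicate point'' you flag at the collapse of the lcm is a genuine gap, but you should know that the paper does not close it either. Hypothesis (2) bounds only $\mathcal{O}_q(n)$, while the lcm involves $\mathcal{O}_{q^{e_q}}(n)=q^{j}\mathcal{O}_q(n)$ with possibly $j\geq 1$; the stray factor $q^{j}$ need not divide $\mathcal{O}_{p^e}(n)$, and if moreover $q\mid p-1$ it can absorb part of $p-1$ in the final divisibility, so the concluding step would fail. The paper's device of ``dividing out $m$ by certain prime powers if necessary'' only discards factors $q_i^{e_i}$ with $n\equiv 1 \bmod q_i^{e_i}$, i.e.\ those of order $1$, and does not address this order-jumping phenomenon at all. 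So for squarefree $m$, or whenever $\mathcal{O}_{q^{e_q}}(n)=\mathcal{O}_q(n)$ for every $q\neq p$, both your argument and the paper's are complete; in the remaining cases you have honestly isolated a step that the paper leaves equally unproved, and which would need either an additional hypothesis on prime powers dividing $m$ or a separate argument ruling out the jump.
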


Theorem \ref{Theorem 1} provides a sufficient condition for $ n $ to be a primitive root modulo $ p $ and also hints at where to look for such primes.

The paper is structured as follows. In \S \ref{Section "Basic Results"} we collect some basic results regarding the digits of $ 1/m $ in base $ n $. We also provide proofs, firstly to fix the notation, and secondly because these results although elementary, have not been proved when $ m $ is not a prime, to the best of authors' knowledge. In \S \ref{Section "Reverse Algorithm"} we describe how the ``reverse process" mentioned before. Finally in \S \ref{Section "Artin Primitive root"} we prove Theorem \ref{Theorem 1}.

\section*{Acknowledgements}

The author would like to thank the Kerala School of Mathematics for its generous hospitality.

\section{Base $ n $ representations and Primitive roots}\label{Section "Basic Results"}

Consider an integer $ n\geq 2 $ which shall remain fixed for the rest of the section. Suppose that $ m $ is another natural number such that $ (n,m)=1 $. Consider the base $ n $ representation of $ 1/m $,
\begin{equation}\label{Equation "Base n representation 1/m"}
	\frac{1}{m} = \sum_{k=1}^{\infty} \frac{a_k(m)}{n^k}.
\end{equation}
The digits $ a_k(m) $ each satisfy $ 0\leq a_k(m)<n $ and are unique for a given $ m $. Furthermore they are periodic in $ k $ and the period equals the order of $ n $ in $ (\Z/m\Z)^\times $. 

The digits $ a_k(m) $ satisfy lot of additional symmetries. To describe them, suppose that $ 0\leq a_1,\ldots,a_l < n $ are integers. The ordered tuple $ (a_1,\ldots,a_l) $ is then called a \textit{string of length} $ l $ modulo $ n $. Given a string $ (a_1,\ldots,a_l) $ modulo $ n $, define
\begin{align}\label{Equation "String integer definition"}
	 (a_1,\ldots,a_l)_n &:=\frac{1}{n}\sum_{k=1}^l a_{l-k+1} n^k,\\
	 \overline{(a_1,\ldots,a_l)} & := \sum_{k=1}^{\infty} \frac{a_k}{n^k},
\end{align}
where, for $ k >l $ we take $ a_k := a_{k-l} $. Finally we say that the string $ (a_1,\ldots,a_l) $ is associated to an integer $ m $ if $ (a_1,\ldots,a_l) $ is a string of smallest length such that $ 1/m = \overline{(a_1,\ldots,a_l)} $. In this case for ease of notation, we shall denote $ (a_1,\ldots,a_l) $ as $ (m) $. This is clearly unique. We shall denote the order of $ n $ in $ (\Z/m\Z)^\times $ as $ \mathcal{O}_m(n) $. Suppose for an integer $ g $ we denote by $ [g]_n $ to be the least positive integer such that $ [g]_n\equiv g\mod n $. Given an integer $ a $ whose base $ n $ representation has $ l $ digits, we will say that $ a $ is periodic with respect to $ n $ with period $ k $ if the base $ n $ digits of $ a $ satisfy $ a_i=a_{i+k} $ for all $ 1\leq i \leq k-l $. Equivalently, we say that $ a $ is periodic with period $ l $ if $ a $ is divisible by 
\[
\frac{n^{k}-1}{n^l-1}.
\]
We observe here that $ \mathcal{O}_a(n) | l $ if $ a $ is periodic with period $ l $.
The symbols $ m,n $ are reserved for mutually prime integers and we shall primarily be trying to understand the subgroup generated by $ n $ inside $ m $.

\begin{lemma}\label{Lemma 1}
	The following relation holds for all $ m $ coprime to $ n $,
	\[
	(m)_n = \frac{n^{\mathcal{O}_m(n) }-1}{m}.
	\]
\end{lemma}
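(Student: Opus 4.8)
The plan is to prove the identity first for an arbitrary string and only then specialise to the string $(m)$ attached to $1/m$. That is, I would first establish that for \emph{every} string $(a_1,\ldots,a_l)$ modulo $n$,
\[
\overline{(a_1,\ldots,a_l)}=\frac{(a_1,\ldots,a_l)_n}{n^{l}-1},
\]
which already carries all the content; the lemma is then the case $l=\mathcal{O}_m(n)$, provided one also checks that this is indeed the length of the string $(m)$.

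The general identity is just a geometric series. Writing each index $k\geq 1$ uniquely as $k=(j-1)l+s$ with $j\geq 1$ and $1\leq s\leq l$, the periodicity convention $a_k:=a_{k-l}$ gives $a_k=a_s$, so
\[
\overline{(a_1,\ldots,a_l)}=\sum_{j\geq1}\sum_{s=1}^{l}\frac{a_s}{n^{(j-1)l+s}}=\Bigl(\sum_{s=1}^{l}\frac{a_s}{n^{s}}\Bigr)\sum_{j\geq1}n^{-(j-1)l}=\frac{n^{l}}{n^{l}-1}\sum_{s=1}^{l}\frac{a_s}{n^{s}}.
\]
Then $n^{l}\sum_{s=1}^{l}a_s n^{-s}=\sum_{s=1}^{l}a_s n^{l-s}$, and a one-line reindexing $k\mapsto l-s+1$ in $\frac1n\sum_{k=1}^{l}a_{l-k+1}n^{k}$ identifies this last sum with $(a_1,\ldots,a_l)_n$ as defined in \eqref{Equation "String integer definition"}. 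This proves the displayed identity.

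It remains to feed in $(m)=(a_1,\ldots,a_l)$ and to pin down $l$. By the identity just proved, a string of length $l$ satisfies $\overline{(a_1,\ldots,a_l)}=1/m$ exactly when $(a_1,\ldots,a_l)_n=(n^{l}-1)/m$; since the left-hand side is an integer, this forces $m\mid n^{l}-1$, i.e.\ $\mathcal{O}_m(n)\mid l$. Conversely, for $l=\mathcal{O}_m(n)$ the integer $(n^{l}-1)/m$ lies in $[0,n^{l}-1]$, hence equals $(a_1,\ldots,a_l)_n$ for some length-$l$ string, which then realises $1/m$. Therefore the smallest admissible length is precisely $\mathcal{O}_m(n)$, so $(m)$ has that length, and evaluating the general identity at $l=\mathcal{O}_m(n)$ yields $(m)_n=(n^{\mathcal{O}_m(n)}-1)/m$. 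I do not expect a genuine obstacle here: the only points needing care are the bookkeeping that $(n^{l}-1)/m$ really has at most $l$ base-$n$ digits, and the observation that (as $m\geq 2$) the degenerate all-$(n-1)$ string represents $1$ rather than $1/m$, so it never competes for the minimal length.
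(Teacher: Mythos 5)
Your proof is correct, but it takes a more ground-up route than the paper. The paper's proof is two lines: multiplying $1/m=\overline{(m)}$ by $n^{\mathcal{O}_m(n)}$ shifts the digits, so $\lfloor n^{\mathcal{O}_m(n)}/m\rfloor=(m)_n$ while $\{n^{\mathcal{O}_m(n)}/m\}=\overline{(m)}=1/m$ by periodicity, and subtracting gives the claim. That argument silently leans on the assertion, stated without proof earlier in \S \ref{Section "Basic Results"}, that the minimal period of the digits of $1/m$ equals $\mathcal{O}_m(n)$ — which is exactly what you prove rather than assume. Your general identity $\overline{(a_1,\ldots,a_l)}=(a_1,\ldots,a_l)_n/(n^l-1)$, obtained by summing the geometric series, algebraically contains the paper's floor/fractional-part decomposition (it rearranges to $n^l/m=(m)_n+1/m$), but it buys two extra things: it pins down the length of $(m)$ as precisely $\mathcal{O}_m(n)$ via the divisibility argument $m\mid n^l-1$, and it is the identity implicitly invoked later in \eqref{Equation "m definition"} when the paper runs the construction in reverse for an arbitrary string $a$. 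Your closing caveats (that $(n^l-1)/m$ fits in $l$ base-$n$ digits, and that the all-$(n-1)$ string represents $1$, not $1/m$) are exactly the right bookkeeping points and are handled correctly. In short: same underlying computation, but your version supplies a justification the paper defers, at the cost of a longer write-up.
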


\begin{proof}
	Since we have $ 1/m = \overline{(m)} $, it follows that 
	\[
	\left\lfloor\frac{n^{\mathcal{O}_m(n) }}{m}\right\rfloor = (m)_n\mbox{ and }\left\{\frac{n^{\mathcal{O}_m(n) }}{m}\right\} = \overline{(m)} = \frac{1}{m}.
	\]
	The lemma follows from here. Here we have used the notations $ \lfloor x\rfloor $ and $ \{x\} $ for the integer and fractional parts of $ x $ respectively.
\end{proof}

\begin{lemma}\label{Lemma 2}
	Suppose that $ k $ is the smallest integer such that $ a_k(m)\neq 0 $. Then $ k=\lceil \log_n(m)\rceil $.
\end{lemma}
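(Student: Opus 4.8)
The plan is to translate the hypothesis on $ a_k(m) $ into a pair of inequalities comparing $ m $ with consecutive powers of $ n $, and then to take logarithms base $ n $. Throughout I assume $ m\geq 2 $ (consistent with the standing hypothesis); the case $ m=1 $ is degenerate since then $ 1/m=1 $ has no nonzero ``fractional'' digit in the sense of \eqref{Equation "Base n representation 1/m"}.

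First I would record the elementary identity obtained by multiplying \eqref{Equation "Base n representation 1/m"} by $ n^j $, for $ j\geq 1 $:
\[
\frac{n^j}{m} \;=\; \sum_{i=1}^{j} a_i(m)\, n^{j-i} \;+\; \sum_{t\geq 1}\frac{a_{j+t}(m)}{n^{t}},
\]
where the first sum is a non-negative integer and the tail sum lies in $ [0,1) $. That the tail is \emph{strictly} less than $ 1 $ is the only place coprimality is used: equality would force all digits beyond position $ j $ to equal $ n-1 $, whence $ n^j = m\cdot(\text{integer}+1) $ and $ m\mid n^j $, contradicting $ (m,n)=1 $ for $ m\geq 2 $. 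Consequently $ \lfloor n^j/m\rfloor $ equals the integer $ \sum_{i=1}^{j} a_i(m) n^{j-i} $ formed by the first $ j $ base $ n $ digits of $ 1/m $, exactly as in the proof of Lemma \ref{Lemma 1}.

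Next, the condition that $ k $ is the least index with $ a_k(m)\neq 0 $ is equivalent to $ \lfloor n^{k-1}/m\rfloor = 0 $ together with $ \lfloor n^{k}/m\rfloor \geq 1 $. By the identity above the first says $ n^{k-1}<m $ and the second says $ m\leq n^{k} $. Hence $ n^{k-1}<m\leq n^{k} $, and applying $ \log_n $ gives $ k-1<\log_n m\leq k $. Since $ \log_n m>k-1 $ we get $ \lceil\log_n m\rceil\geq k $, and since $ \log_n m\leq k $ we get $ \lceil\log_n m\rceil\leq k $; therefore $ \lceil\log_n m\rceil = k $, as claimed. (One may avoid the floor identity entirely and argue directly: if $ a_1=\cdots=a_{k-1}=0 $ then $ 1/m=\sum_{i\geq k}a_i(m)/n^i\leq (n-1)\sum_{i\geq k}n^{-i}=n^{-(k-1)} $, with equality impossible for $ m\geq 2 $, so $ n^{k-1}<m $; and $ 1/m\geq a_k(m)/n^k\geq n^{-k} $ gives $ m\leq n^k $.)

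I do not anticipate a genuine obstacle: the argument is short. The only points requiring care are the strictness of $ n^{k-1}<m $ — which is precisely where $ (m,n)=1 $ enters, ruling out $ m $ being an exact power of $ n $ — and the boundary behaviour of the ceiling function, both of which are handled above.
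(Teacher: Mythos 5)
Your proof is correct and takes essentially the same route as the paper: both reduce the hypothesis on the digits to the two-sided inequality $\frac{1}{n^{k-1}} > \frac{1}{m} \geq \frac{1}{n^{k}}$ and then apply $\log_n$ to read off the ceiling. The paper states this inequality without justification, whereas you supply the details (the floor identity for $\lfloor n^j/m\rfloor$ and the use of $(m,n)=1$ to rule out the boundary cases), which is a faithful filling-in rather than a different argument.
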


\begin{proof}
	From the assumption it follows that 
	\[
	\frac{1}{n^{k-1}} > \frac{1}{m} > \frac{1}{n^k}.
	\]
	Now the lemma follows.
\end{proof}

\begin{lemma}\label{Lemma 3}
	Suppose that $ (m) = (a_1,a_2,\ldots,a_l) $. If $ a\equiv n^t \mod m $ for some $ t\geq 0 $, then, 
	\[
	\frac{1}{a} = \overline{(a_{t+1},\ldots, a_l,a_1,\ldots, a_t)}.
	\]
	Conversely, for every $ t\geq 0 $, 
	\[
	\overline{(a_{t+1},\ldots, a_l,a_1,\ldots, a_t)}=\left\{\frac{n^t}{m}\right\}.
	\]
\end{lemma}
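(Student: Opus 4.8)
The plan is to reduce both assertions to a single computation: shifting the base-$n$ expansion of $1/m$ by $t$ places and separating its integer and fractional parts. I would prove the ``conversely'' statement first, since the direct statement is then immediate.

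\emph{The shift identity.} Write $\tfrac1m=\sum_{k\ge 1}a_k n^{-k}$ as in \eqref{Equation "Base n representation 1/m"}; by the discussion preceding Lemma~\ref{Lemma 1} the digits are periodic of period $l=\mathcal{O}_m(n)$, so $a_{k+l}=a_k$ for all $k\ge 1$ and $(m)=(a_1,\dots,a_l)$. Multiplying by $n^t$ and reindexing with $j=k-t$ gives
\[
\frac{n^t}{m}=\sum_{j\ge 1-t}a_{j+t}\,n^{-j}=\Big(\,\sum_{i=0}^{t-1}a_{t-i}\,n^{i}\Big)+\sum_{j\ge 1}a_{j+t}\,n^{-j}.
\]
The first bracket is a non-negative integer (empty when $t=0$). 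For the tail, the sequence $(a_{1+t},a_{2+t},\dots)$ is again periodic of period $l$, and using $a_{l+i}=a_i$ for $1\le i\le t$ its first block of length $l$ is precisely $(a_{t+1},\dots,a_l,a_1,\dots,a_t)$; hence the tail equals $\overline{(a_{t+1},\dots,a_l,a_1,\dots,a_t)}$ by the definition of $\overline{(\cdot)}$. Finally this tail lies in $[0,1)$: it is a sum of digit-weighted powers $n^{-j}$, so it is $\ge 0$ and $\le\sum_{j\ge 1}(n-1)n^{-j}=1$, with equality only if every digit is $n-1$, which would force $1/m=1$, impossible since $m\ge 2$. Therefore the displayed line is exactly the integer-part/fractional-part decomposition of $n^t/m$, i.e. $\left\{\tfrac{n^t}{m}\right\}=\overline{(a_{t+1},\dots,a_l,a_1,\dots,a_t)}$.

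\emph{The direct statement.} If $a\equiv n^t\pmod m$, then $\tfrac am-\tfrac{n^t}{m}\in\Z$, so $\left\{\tfrac am\right\}=\left\{\tfrac{n^t}{m}\right\}$, and by the shift identity this common value is $\overline{(a_{t+1},\dots,a_l,a_1,\dots,a_t)}$; equivalently, the base-$n$ expansion of $a/m$ is the $t$-fold cyclic rotation of that of $1/m$. (Here, as throughout this section, $a$ is read through its residue modulo $m$; note that the right-hand side depends only on $n^t\bmod m$, which is why $t$ is not uniquely determined by $a$.) The case $t\ge l$ is subsumed because rotating a string of period $l$ by $t$ or by $t\bmod l$ gives the same result, consistent with $n^t\equiv n^{t\bmod l}\pmod m$.

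\emph{Main obstacle.} There is no substantial difficulty; the only points requiring care are the index bookkeeping when the shifted series is split --- in particular the edge cases $t=0$ and $t\ge l$, both absorbed by periodicity --- and the short verification that the periodic tail is strictly less than $1$, which is what licenses reading it off as the fractional part of $n^t/m$.
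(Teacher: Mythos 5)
Your proof is correct and takes essentially the same route as the paper's: both reduce the lemma to the identity $\left\{\tfrac{a}{m}\right\}=\left\{\tfrac{n^t}{m}\right\}=\tfrac{[n^t]_m}{m}$ together with the periodicity of the digits, your version merely making explicit the series bookkeeping that the paper compresses into ``the definition of $(m)$''. One remark: what you (and the paper's own one-line argument) actually establish is $\left\{\tfrac{a}{m}\right\}=\overline{(a_{t+1},\ldots,a_l,a_1,\ldots,a_t)}$, and the ``$\tfrac{1}{a}$'' in the statement must be read that way --- e.g.\ for $n=10$, $m=7$, $t=1$, $a=3$ the rotated string gives $3/7=0.\overline{428571}$, not $1/3$ --- so your silent reinterpretation is the right one.
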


The string $ (a_{t+1},\ldots, a_l,a_1,\ldots, a_t) $ is called a cyclic permutation of $ (a_1,a_2,\ldots,a_l) $ and is denoted by $ \sigma_t((a_1,a_2,\ldots,a_l)) $.

\begin{proof}
	The proof follows from the observation that
	\begin{equation}\label{Equation "Fractional part of n^t/m"}
		\left\{\frac{a}{m}\right\} = \left\{\frac{n^t}{m}\right\} = \frac{[n^t]_m}{m}
	\end{equation}
	and from the definition of $ (m) $.
\end{proof}

The proof of the following lemma is an easy modification of proof of Lemma 1 of \cite{Ram-Tha}.

\begin{lemma}\label{Lemma 4}
	With $ n,m $ as above, we have 
	\[
	a_k(m) = \frac{n[n^{k-1}]_m-[n^k]_m}{m}.
	\]
\end{lemma}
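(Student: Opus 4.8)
The plan is to read off the digit $a_k(m)$ directly from the series \eqref{Equation "Base n representation 1/m"} by a shift-and-truncate argument, and then to translate the resulting fractional parts into least positive residues using \eqref{Equation "Fractional part of n^t/m"} from the proof of Lemma \ref{Lemma 3}. Concretely, I would multiply \eqref{Equation "Base n representation 1/m"} through by $n^{k-1}$ and split the series at the index $k$:
\[
\frac{n^{k-1}}{m}=\sum_{j=1}^{k-1}a_j(m)\,n^{k-1-j}+\sum_{j=k}^{\infty}\frac{a_j(m)}{n^{j-k+1}}.
\]
The first sum is an integer, and the second lies in $[0,1)$ because $0\le a_j(m)<n$ for all $j$ (the tail is bounded by $\sum_{i\ge 1}(n-1)/n^i=1$, and equality cannot occur since $(m,n)=1$ with $m\ge 2$ forces the expansion of $1/m$ to be non-terminating). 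Hence this split is exactly the integer/fractional part decomposition, and in particular $\left\{n^{k-1}/m\right\}=\sum_{j\ge k}a_j(m)/n^{j-k+1}$.

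Next I would isolate the leading term of that tail. Writing $\sum_{j\ge k}a_j(m)/n^{j-k+1}=a_k(m)/n+\tfrac1n\sum_{j\ge k+1}a_j(m)/n^{j-k}$ and recognizing the last sum as $\left\{n^{k}/m\right\}$ (the same decomposition applied with $k$ replaced by $k+1$), I get
\[
n\left\{\frac{n^{k-1}}{m}\right\}=a_k(m)+\left\{\frac{n^{k}}{m}\right\}.
\]
Since $\left\{n^{k}/m\right\}\in[0,1)$ this also re-derives $a_k(m)=\big\lfloor n\{n^{k-1}/m\}\big\rfloor$, which serves as a consistency check.

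Finally I would substitute \eqref{Equation "Fractional part of n^t/m"}, namely $\left\{n^t/m\right\}=[n^t]_m/m$ for every $t\ge 0$, into the displayed identity to obtain
\[
\frac{n[n^{k-1}]_m}{m}=a_k(m)+\frac{[n^k]_m}{m},
\]
and rearranging yields the asserted formula $a_k(m)=\dfrac{n[n^{k-1}]_m-[n^k]_m}{m}$. I do not anticipate a genuine obstacle: the argument is essentially bookkeeping, and the only point needing a little care is justifying that the truncation at index $k$ really is the integer/fractional part split (equivalently, that $1/m$ has no terminating base $n$ expansion), after which everything reduces to the already-established identity \eqref{Equation "Fractional part of n^t/m"}.
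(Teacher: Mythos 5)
Your proof is correct, but it runs in the opposite direction from the paper's. You extract $a_k(m)$ \emph{forwards} from the expansion: multiplying \eqref{Equation "Base n representation 1/m"} by $n^{k-1}$, identifying the truncation at index $k$ as the integer/fractional decomposition, deriving the recursion $n\{n^{k-1}/m\}=a_k(m)+\{n^k/m\}$, and then substituting $\{n^t/m\}=[n^t]_m/m$. The paper instead works \emph{backwards}: it takes the candidate values $\frac{n[n^{k-1}]_m-[n^k]_m}{m}$, shows by a telescoping sum that $\sum_k \frac{n[n^{k-1}]_m-[n^k]_m}{mn^k}=\frac1m$, and then invokes uniqueness of the base $n$ digits. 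Each route has a small obligation the other avoids. The paper's telescoping argument is shorter but must separately verify that the candidate values are integers in $[0,n-1]$ (integrality because $n[n^{k-1}]_m\equiv[n^k]_m\bmod m$, the range because $[n^k]_m$ is the \emph{least} positive residue of $n\cdot[n^{k-1}]_m$), and strictly speaking its telescoping sum also needs $[n^k]_m/n^k\to 0$; the paper only gestures at the first of these. Your version gets integrality and the range $0\le a_k(m)<n$ for free, since $a_k(m)$ is a digit by definition, but in exchange you must justify that the tail $\sum_{j\ge k}a_j(m)/n^{j-k+1}$ lies in $[0,1)$ — which you do correctly by noting that equality $=1$ would force all digits to equal $n-1$ eventually, hence everywhere by periodicity, contradicting $m\ge 2$. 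Both arguments are sound; yours is arguably the more self-contained derivation, while the paper's is the quicker verification.
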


\begin{proof}
	We start with
	\begin{align*}
		\sum_{k=1}^\infty \frac{n[n^{k-1}]_m-[n^k]_m}{mn^k}&= \frac{1}{m} \sum_{k=1}^\infty\frac{n[n^{k-1}]_m-[n^k]_m}{n^k}\\
		&= \frac{1}{m}\left(	\sum_{k=1}^\infty \frac{[n^{k-1}]_m}{n^{k-1}} - \sum_{k=1}^\infty\frac{[n^k]_m}{n^k}\right)\\
		&=\frac{1}{m}.
	\end{align*}
Now the lemma follows from the uniqueness of $ a_k(m) $s once we observe that $ \frac{n[n^{k-1}]_m-[n^k]_m}{m} $ are integers lying in between $ 0 $ and $ n-1 $.
\end{proof}

As a particular consequence of the previous lemma, we have that
\begin{equation}\label{Equation "Congruence of ak"}
	ma_k(m)\equiv -[n^k]_m \mod n.
\end{equation}

\begin{lemma}\label{Lemma 5}
	Suppose that $ m $ is a prime and that $ \mathcal{O}_m(n) $ is even for some $ n\not\equiv -1\mod m $. Then for every $ k $ we have 
	\[
	a_k(m) + a_{k+\frac{\mathcal{O}_m(n)}{2}}(m) = n-1.
	\]
\end{lemma}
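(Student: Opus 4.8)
The plan is to exploit the classical fact that when $m$ is prime and $d := \mathcal{O}_m(n)$ is even, the element $n^{d/2}$ is a square root of $1$ in the field $\Z/m\Z$ different from $1$, hence $n^{d/2}\equiv -1\mod m$. First I would record this: since $(\Z/m\Z)^\times$ is cyclic of order $m-1$ and $n$ has even order $d$, the power $x := n^{d/2}$ satisfies $x^2\equiv 1$ and $x\not\equiv 1\mod m$, so $x\equiv -1\mod m$; equivalently $[n^{d/2}]_m = m-1$, and more generally $[n^{k+d/2}]_m = m - [n^k]_m$ for every $k$ (because multiplying $[n^k]_m$ by $-1$ modulo the prime $m$ sends the residue $r$ to $m-r$, and $r\neq 0$ since $(n,m)=1$).

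Next I would feed this into the formula of Lemma \ref{Lemma 4}. Writing $a_k := a_k(m)$ and $r_k := [n^k]_m$, Lemma \ref{Lemma 4} gives $m a_k = n r_{k-1} - r_k$ and $m a_{k+d/2} = n r_{k-1+d/2} - r_{k+d/2}$. Substituting $r_{k-1+d/2} = m - r_{k-1}$ and $r_{k+d/2} = m - r_k$ into the second identity yields
\[
m a_{k+d/2} = n(m - r_{k-1}) - (m - r_k) = nm - m - (n r_{k-1} - r_k) = nm - m - m a_k.
\]
Dividing by $m$ gives $a_{k+d/2} = n - 1 - a_k$, which is exactly the claimed relation $a_k + a_{k+d/2} = n-1$. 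The hypothesis $n\not\equiv -1\mod m$ is needed precisely to guarantee $x = n^{d/2}\neq 1$, i.e. that $d/2$ is genuinely not a multiple of $d$ in a degenerate way; I should double-check the edge case where $d = 2$ and confirm the argument still goes through (here $n^{d/2}=n\equiv -1$ is excluded by hypothesis, consistent with the statement being about a nontrivial symmetry).

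The only real obstacle is the justification that $n^{d/2}\equiv -1\mod m$ rather than merely $n^{d/2}\not\equiv 1$; this is where primality of $m$ is essential, since it is what makes $\Z/m\Z$ a field with only two square roots of unity. Everything after that is the routine substitution above into Lemma \ref{Lemma 4}. I would also remark that the identity $r_{k+d/2} = m - r_k$ can alternatively be read off from Lemma \ref{Lemma 3}: the cyclic permutation $\sigma_{d/2}$ applied to $(m)$ produces the expansion of $\{n^{d/2}/m\} = \{-1/m\} = (m-1)/m$, and comparing digits term by term recovers the same complementation; either route is acceptable, but the Lemma \ref{Lemma 4} computation is the cleanest to write out.
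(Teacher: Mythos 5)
Your proof is correct and takes essentially the same route as the paper: both arguments rest on Lemma \ref{Lemma 4} combined with the observation that $n^{\mathcal{O}_m(n)/2}\equiv -1 \pmod m$ (primality of $m$ forcing the nontrivial square root of unity to be $-1$), so that $[n^{k+\mathcal{O}_m(n)/2}]_m = m-[n^k]_m$. The only difference is cosmetic: you substitute into the exact formula of Lemma \ref{Lemma 4} and get $a_{k+\mathcal{O}_m(n)/2}(m)=n-1-a_k(m)$ directly, while the paper reduces to the congruence $m\,a_k(m)\equiv -[n^k]_m \pmod n$ and then uses the bounds $0\le a_k(m)\le n-1$ to pin down the sum.
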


\begin{proof}
	From \eqref{Equation "Congruence of ak"} we have
	\[
	m(a_k(m)+a_{k+\frac{\mathcal{O}_m(n)}{2}}(m)) \equiv -[n^k]_m - [n^{k+\frac{\mathcal{O}_m(n)}{2}}]_m\mod n.
	\]
	Since $ n^{k+\frac{m-1}{2}}\equiv -n^k\mod m $, we have 
	\begin{align*}
		m(a_k(m)+a_{k+\frac{\mathcal{O}_m(n)}{2}}(m)) &\equiv -[n^k]_m + [n^k]_m -m \mod n\\
		&\equiv -m \mod n.
	\end{align*}
	Since $ (n,m)=1 $, this in particular gives us that 
	\[
	a_k(m)+a_{k+\frac{\mathcal{O}_m(n)}{2}}(m)\equiv -1\mod n.
	\]
	Now since $ 0\leq a_k(m),a_{k+\frac{\mathcal{O}_m(n)}{2}}(m) \leq n-1 $, the claim follows.
\end{proof}

\subsection{The case when $ n=2 $}\label{Subsection "Base 2"}

Further properties of the string $ (m) $ can be obtained. For simplicity suppose that $ n=2 $ is a primitive root modulo $ m > 3 $. Suppose that $ (m) = 0_{t_1}1_{t_2}0_{t_3}\ldots 1_{t_l} $. Here $ 0_j $ stands for $ \underbrace{00\ldots 0}_{\tiny{j \mbox{ times}}} $.

From Lemma \ref{Lemma 5} it follows that $ l\equiv 2\mod 4 $ and that $ t_i = t_{l/2+i} $. From Lemma \ref{Lemma 3}, the cyclic permutations of $ \overline{(m)} $ give rise to $ a/m $ for every $ 1\leq a\leq m-1 $. Therefore there exists a $ s $ such that $ \overline{\sigma_s(0_{t_1},\ldots,1_{t_l})} = 3/m $. In binary digits $ 3=11 $ and therefore multiplication by $ 3 $ equals multiplication by $ 11 $. From Lemma \ref{Lemma 2} it is clear that $ t_1 > 0 $ and by construction $ t_3 > 0 $. For simplicity, suppose that $ t_2 > 2 $. Consider the following sum,
\begin{equation*}
	0\overbrace{11\ldots11}^{\tiny{t_2\mbox{ times}}}+\overbrace{11\ldots11}^{\tiny{t_2\mbox{ times}}}0 = 10 \overbrace{11\ldots11}^{\tiny{t_2-2\mbox{ times}}}01.
\end{equation*}
Therefore, it follows that for every even $ i $, if $ t_i > 2 $ then there exists an even $ j $ such that $ t_j=t_i-2 $.

\section{A Reconstruction Algorithm}\label{Section "Reverse Algorithm"}

If $ m $ is co prime to $ n $, we have seen in the previous section that 
\begin{equation}\label{Equation "m (m) connection"}
	(m)_n m = n^{\mathcal{O}_m(n)}-1.
\end{equation}
Therefore it follows that $ \mathcal{O}_m(n) $ is divisible by the order of $ n $ in $ (\Z/(m)_n\Z)^\times $. In this section we try to answer the question given a string of integers $ (a_1,\ldots,a_l) $ modulo $ n $, what is the corresponding $ m $ and when will that $ m $ be a prime.

For simplicity suppose we denote $ (a_1,\ldots,a_l)_n = a $. Since we have assumed that $ a_l $ is co prime to $ n $, it follows that $ a $ is coprime to $ n $. The previous section tells us that our candidate for $ m $ should be $ \overline{(a_1,\ldots,a_l)}^{-1} $. Now, there is a problem we need to address here. The value $ a $ does not change if we add zeroes to the left of the string $ (a_1,\ldots,a_l)_n $ but doing so will change the value of $ \overline{(a_1,\ldots,a_l)} $. Therefore we make the following convention. Clearly $ \mathcal{O}_a(n) > l-1 $. Now we add zeroes to the left of the string $ (a_1,\ldots,a_l) $ so that $ \mathcal{O}_a(n) = l $. By abuse of notation, we shall denote the string after adding zeroes also by $ (a_1,\ldots,a_l) $. Now define $ m=\overline{(a_1,\ldots,a_l)}^{-1} $, or in other words 
\begin{equation}\label{Equation "m definition"}
	m:=\frac{n^{\mathcal{O}_a(n)}-1}{a}.
\end{equation}
Clearly $ m $ divides $ n^{\mathcal{O}_a(n)}-1 $ and therefore $ \mathcal{O}_m(n) | \mathcal{O}_a(n) $. We also see from Lemma \ref{Lemma 3} that the subgroup generated by $ n $ inside $ (\Z/m\Z)^\times $ is given by cyclic permutations of the string $ (a_1,\ldots,a_l) $.

Furthermore, if we assume that $ (a_1,\ldots,a_l) $ has no repetitions, we see that $ \mathcal{O}_m(n) = l = \mathcal{O}_a(n) $.

\section{Comments on Artin's primitive root conjecture}\label{Section "Artin Primitive root"}

Before we prove Theorem \ref{Theorem 1} we need the following preparatory result.

\begin{proposition}\label{Proposition 1}
	Suppose that $ m $ is an odd prime. A natural number $ n $ is a primitive root modulo $ m $ if and only if the base $ n $ representation of 
	\[
	\alpha:=\frac{n^{m-1}-1}{m}
	\]
	is not periodic.
\end{proposition}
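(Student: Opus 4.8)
The plan is to connect periodicity of the base $n$ expansion of $\alpha = (n^{m-1}-1)/m$ with the order of $n$ modulo $m$, using Lemma \ref{Lemma 1} and the divisibility criterion for periodic strings recalled in \S\ref{Section "Basic Results"}. Observe first that if $n$ is a primitive root modulo $m$ then $\mathcal{O}_m(n) = m-1$, and by Lemma \ref{Lemma 1} we have $(m)_n = (n^{\mathcal{O}_m(n)}-1)/m = (n^{m-1}-1)/m = \alpha$. Since $(m)$ is by definition a string of smallest length representing $1/m$, the string of digits of $\alpha$ has no proper period, i.e.\ $\alpha$ is not periodic with respect to $n$. This handles one direction.

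For the converse, suppose $n$ is \emph{not} a primitive root modulo $m$, so that $d := \mathcal{O}_m(n)$ is a proper divisor of $m-1$. By Lemma \ref{Lemma 1}, $(m)_n = (n^{d}-1)/m$, and this is the string $(m)$ of length $d$. Now I would write
\[
\alpha = \frac{n^{m-1}-1}{m} = \frac{n^{d}-1}{m}\cdot\frac{n^{m-1}-1}{n^{d}-1} = (m)_n\cdot\sum_{j=0}^{(m-1)/d - 1} n^{jd}.
\]
The factor $\frac{n^{m-1}-1}{n^{d}-1} = \sum_{j} n^{jd}$ is exactly the integer whose base $n$ expansion is the length-$(m-1)$ string obtained by repeating a block $(m-1)/d$ times; equivalently $\alpha$ is divisible by $\frac{n^{m-1}-1}{n^{d}-1}$, which by the criterion recorded before Lemma \ref{Lemma 1} says precisely that $\alpha$ is periodic with respect to $n$ with period $d < m-1$. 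Hence $\alpha$ is periodic, completing the contrapositive.

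The one point that needs care — and which I expect to be the main obstacle — is the bookkeeping of leading zeros and the exact length conventions: $\alpha$ as an integer has some number of base $n$ digits which a priori could be strictly less than $m-1$ (if $(m)$ itself begins with zeros, cf.\ Lemma \ref{Lemma 2}), and one must check that "periodic with period $d$" in the sense of the definition in \S\ref{Section "Basic Results"} is correctly matched to divisibility by $(n^{m-1}-1)/(n^{d}-1)$ after padding to length $m-1$. I would address this by working throughout with the divisibility formulation of periodicity rather than with the digit strings directly: the identity $n^{d}-1 \mid n^{m-1}-1 \mid$ -- and transitively $(n^{m-1}-1)/(n^{d}-1)$ divides $\alpha$ -- is an unconditional statement about integers once $d \mid m-1$, and conversely if $\alpha$ is divisible by $(n^{e}-1)/(n^{f}-1)$ for some $f \mid e$, $f < e \le m-1$ then unwinding via Lemma \ref{Lemma 1} forces $\mathcal{O}_m(n) \le f < m-1$. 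This keeps the argument free of any delicate digit-counting.
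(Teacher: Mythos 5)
Your proposal is correct and takes essentially the same route as the paper: both directions rest on Lemma \ref{Lemma 1} together with the factorization $\alpha = (m)_n\cdot\frac{n^{m-1}-1}{n^{d}-1}$ where $d=\mathcal{O}_m(n)$, the paper phrasing the converse directly (``$\alpha$ not periodic forces $k=1$'') while you phrase it contrapositively. Your explicit appeal to the divisibility formulation of periodicity to avoid the leading-zero bookkeeping is a detail the paper's proof glosses over, but the underlying argument is identical.
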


\begin{proof}
	Suppose that $ n $ is a primitive root modulo $ n $, then from Lemma \ref{Lemma 1} it follows that $ \alpha=(m)_n $. Therefore, by definition, the base $ n $ representation of $ \alpha $ is not periodic.
	
	Conversely suppose that the base $ n $ representation of $ \alpha $ is not periodic. It is clear that $ \mathcal{O}_m(n) $ divides $ m-1 $. Set $ (m-1)/\mathcal{O}_m(n) = k $. Then it follows from induction and Lemma \ref{Lemma 1} that
	\[
	 \alpha = (m)_n \left(n^{(k-1)\mathcal{O}_m(n)}+ \ldots + 1\right) = (m)_n \left(\frac{n^{m-1}-1}{n^{\mathcal{O}_m(n)}-1}\right).
	\]
	But since $ \alpha $ is not periodic, it follows that $ k-1=0 $ or equivalently $ m-1=\mathcal{O}_m(n) $. Thus $ n $ is a primitive root.
\end{proof}

\begin{proof}[Proof of Theorem \ref{Theorem 1}]
	Suppose we start with an $ n $ and a non periodic string $ (a_1,\ldots,a_l) $ such that $ \mathcal{O}_a(n) = l $ (with respect to the notation from \S \ref{Section "Reverse Algorithm"}). Suppose we define $ m $ as in \eqref{Equation "m definition"} and that 
\[
m = p^e\prod_{i=1}^{j} q_i^{e_i}
\]
is the prime factorization of $ m $. Furthermore, since 
\[
am = n^{\mathcal{O}_m(n)}-1,
\]
we see that every prime $ q $ satisfies,
\[
q|am \Leftrightarrow \mathcal{O}_q(n) | \mathcal{O}_m(n).
\]
Now, from the chinese remainder theorem, there is an isomorphism
\[
\left(\Z/m\Z\right)^\times \leftrightarrow (\Z/p^e\Z)^\times \times \prod_{i=1}^{j} \left(\Z/q_i^{e_i}\Z\right)^\times.
\]
Dividing out $ m $ by certain prime powers if necessary, we can suppose that $ n\not\equiv 1 \mod q_i^{e_i} $ for every $ i $.
Furthermore, it also follows that 
\[
\mathcal{O}_m(n) = LCM(\mathcal{O}_{p^e}(n),\mathcal{O}_{q_i^{e_i}}(n)) = \mathcal{O}_{p^e}(n).
\]
Therefore $ (p-1) $ divides $ \mathcal{O}_{p^e}(n) $. In particular we have $ \mathcal{O}_p(n)=p-1 $. This completes the proof.
\end{proof}

\section{Concluding Remarks}\label{Section "Concluding Remarks"}

Suppose we have a non square positive integer $ n >1 $ and we want to ``guess'' a prime $ m $ such that $ n $ is a primitive root modulo $ m $. Theorem \ref{Theorem 1} provides a first order filter in the form of the non periodic sequence $ (a) $. Although it is possible to choose various candidates for $ (a) $, the primes occurring in the assumptions of Theorem \ref{Theorem 1} are the divisors of $ a $ along with $ m $. Or in other words primes $ q $ such that $ \mathcal{O}_q(n) $ divides $ \mathcal{O}_a(n) $. By restricting the choice of $ a $, it should be possible to exercise some control over the primes dividing $ am $ so that the conditions of Theorem \ref{Theorem 1} can be met. But it is not entirely clear how this can be done for an infinite family. Also, while constructing such an infinite family of strings it should be kept in mind that the primes $ p $ (in the notation of Theorem \ref{Theorem 1}) should be distinct.

It would be very convenient if the $ m $ obtained from $ (a) $ themselves turn out to be prime. Being able to construct such strings would amount to completely understanding the symmetries under cyclic permutations mentioned in Lemma \ref{Lemma 3}. Furthermore, the preliminary lemmas proved in \S \ref{Section "Basic Results"} and \S \ref{Subsection "Base 2"} serve as necessary conditions for a string $ (a) $ to correspond to a prime $ m $. It would be interesting to characterize the strings which are connected to prime $ m $.


\begin{thebibliography}{25}
	
	\bibitem{Heath-Brown}  D.R. Heath-Brown, Artin’s conjecture for primitive roots, Quart. J. Math. Oxford 37 (1986),
	27–38.
	
	\bibitem{Hooley}  C. Hooley, Artin’s conjecture for primitive roots, J. Reine Angew. Math. 225 (1967), 209–
	220.
	
	\bibitem{Pieter Moore} Moore Pieter, Artin's primitive root conjecture, a survey (with contributions from A.C. Cojocaru, W. Gajda and H. Graves) arXiv : https://arxiv.org/abs/math/0412262.
	
	\bibitem{Ram-Tha} Ram Murty, Thangadurai, The class number of $ \Q(\sqrt{-p}) $ and the digits of $ 1/p$, Proc. Am. Math. Soc. 139 (2011), 1277–128.

\end{thebibliography}
\end{document}